\documentclass[reqno,12pt]{amsart}
\usepackage{a4wide,color,eucal,enumerate,mathrsfs}
\usepackage[normalem]{ulem}
\usepackage{amsmath,amssymb,epsfig,amsthm}
\usepackage[latin1]{inputenc}
\usepackage{psfrag}
\usepackage{hyperref,cleveref,tikz}
\usepackage{booktabs,longtable,array}
\usetikzlibrary{arrows.meta,positioning,calc}
\usepackage{epigraph}

\newcolumntype{P}[1]{>{\raggedright\arraybackslash}p{#1}}

\numberwithin{equation}{section}

\newtheorem{theorem}{Theorem}[section]

\newtheorem{lemma}[theorem]{Lemma}

\theoremstyle{remark}

\newcommand{\R}{\mathbb{R}}
\newcommand{\Om}{\Omega}

\newcommand{\diver}{\operatorname{div}}

\title[convexity of level sets for elliptic equations]
{A short survey on the convexity of level sets for divergence-form elliptic equations}
\author{Yi Ru-Ya Zhang}
\date{\today}

\address{State Key Laboratory of Mathematical Sciences, Academy of Mathematics and Systems Science, Chinese Academy of Sciences, Beijing 100190, China}
\address{Institute of mathematics, Academy of Mathematics and Systems Science, the Chinese Academy of Sciences, Beijing 100190, China}
\email{yzhang@amss.ac.cn}

\thanks{The author is funded by the National Key R\&D Program of China (Grant No. 2025YFA1018400 \& No. 2021YFA1003100), NSFC Grant No. 12288201 \& No. 12571128, the Chinese Academy of Sciences, and CAS Project for Young Scientists in Basic Research, Grant No. YSBR-031.}

\subjclass[2020]{35J61, 35B50, 35J25}
\keywords{convex level sets; divergence-form elliptic equations;  maximum principle}

\begin{document}

\begin{abstract}
Let $\Omega\subset\mathbb{R}^n$ be a bounded, smooth and (uniformly) convex domain, and let $u$ be a positive solution to an elliptic boundary value problem with zero Dirichlet data. A classical problem in the analysis of partial differential equations is to determine whether the convexity of $\Omega$ is inherited by the superlevel (or sublevel) sets of $u$. 

This survey considers divergence-form equations in bounded convex domains. For torsion functions, first eigenfunctions, and semilinear equations with sublinear nonlinearities, concavity of suitable transforms of $u$ gives positive results. Two further methods, i.e. envelope constructions and constant-rank theorems for level-set curvature, often provide alternative routes to convexity and related properties. These results are complemented by counterexamples, showing that in general convexity of the domain and ellipticity of the operator do not guarantee quasiconvexity for every solution. We present these counterexamples through a shared underlying idea, and describe the distinct ways in which different equations implement this idea.
\end{abstract}
\maketitle

\section{Introduction}\label{sec:framework}

\subsection{The geometric question}

Convexity properties of solutions form a classical meeting point of elliptic partial differential equations and convex geometry.  Even when a boundary value problem is well posed on a convex domain, it is rarely reasonable to expect the solution itself to be convex or concave.  A more natural question concerns its level sets.  

Suppose that $\Omega\subset\mathbb R^n$ is bounded and convex, and $u>0$ in $\Omega$ with $u=0$ on $\partial\Omega$, we write
$$
 \Omega_t=\{x\in\Omega:u(x)>t\},
 \qquad 0<t<\max_{\overline\Omega}u.
$$
The function $u$ is called \emph{quasiconcave} when every set $\Omega_t$ is convex. One defines the quasiconvexity in a similar manner for sublevel sets.   This property is invariant under increasing changes of the dependent variable and is therefore weaker, and often more robust, than ordinary concavity of functions.

Several benchmark theorems suggest how much structure can lie behind quasiconcavity.  For the torsion problem, square-root concavity was proved in the plane by Makar--Limanov \cite{ML1971} and in arbitrary dimension by Kennington \cite{Kennington1985}.  Brascamp and Lieb \cite{BL1976} established the log-concavity of the first Dirichlet eigenfunction by showing that log-concavity is preserved under the heat flow. In particular,  Kennington's power-concavity theorem \cite[Theorems 3.3 and 4.2]{Kennington1985} gives concavity of $u^{(1-\gamma)/2}$ for $-\Delta u=ku^\gamma$, $0\leq\gamma<1$.  These results prove more than convexity of level sets: Indeed, they identify a transformed function whose Hessian has a sign.  Subsequently, the  constant-rank results of Caffarelli--Friedman \cite{CF1985} and Korevaar--Lewis \cite{KL1987} then explain how an already established Hessian sign can become even strict. Actually, it was conjectured earlier by Lions \cite[Remark 3]{L1981} that, "for general $f$, the (super)level sets of any solution $u\in C^2(\overline{\Omega})$ of  
\begin{equation}\label{eq:semilinear-equ}
 -\Delta u=f(u) \quad  \text{ in }\ \Omega, \qquad  u=0\quad \text{ on } \ \partial\Omega, \qquad  u>0,
\end{equation}
are convex."

However, there are important equations for which no useful global concavity transform is known.  Usually two other traditions address this situation.  The first one studies several points on the same level, or equivalently, the quasiconcave envelope of solutions. It converts the geometry of supporting hyperplanes into a viscosity comparison.  The second one studies the second fundamental form of a single regular level and applies a microscopic maximum principle to its smallest-rank modes.  Caffarelli--Spruck \cite {CS1982}, Korevaar \cite{K1990}, and Colesanti--Salani  \cite{CS2003} developed representative versions of these methods.  The three positive viewpoints, i.e.  a transformed Hessian, a finite equal-level contact, and an infinitesimal curvature matrix, are closely related, but they are not generally interchangeable.  One of the purposes of this survey is to make their different assumptions and conclusions visible in the formulas.

The positive part of the theory is not the whole story.  A celebrated work by Wang \cite{W2014} established a smooth uniformly convex domain for which the solution of the constant-mean-curvature Dirichlet problem has a nonconvex level set.  This result directly implies that energy minimality does not guarantee the quasiconvexity or quasiconcavity of the corresponding minimizers. On a separate note,  Hamel, Nadirashvili, and Sire \cite{HNS2016} gave an important example by  presenting a semilinear problem with suitably chosen $f$ in \eqref{eq:semilinear-equ} on a convex domain, with two positive solutions of opposite  geometry on their superlevel sets in deliberately chosen branches, answering the problem of Lions negatively.  More recently, the author in \cite{Z2026} showed that even for the minimal and strictly stable  branch, a planar solution may have a nonconvex superlevel set, which answers \cite[Open Problem 3]{B2003} negatively. Subsequently a non-quasiconvex solution for the planar translating mean-curvature equation was  constructed in \cite{LLMQ2026},  via a chosen corrector together with a barrier argument.  These examples change the form of the question.  Convexity of the domain and ellipticity of the operator, even minimality or stability,  are not sufficient data for an individual solution. 

\subsection{Scope and organizing viewpoints}

The main focus is the divergence-form family
\begin{equation}\label{eq:prototype}
 \begin{cases}
 -\operatorname{div}\!\bigl(a(|\nabla u|)\nabla u\bigr)=\mathcal F(u,|\nabla u|)&\text{in }\Omega,\\
 u>0&\text{in }\Omega,\\
 u=0&\text{on }\partial\Omega,
 \end{cases}
\end{equation}
with ellipticity expressed by
$$
 a(q)>0,  \quad \text{ and } \quad  a(q)+qa'(q)>0,
$$
where the first condition controls diffusion perpendicular to the gradient, and the second one controls diffusion parallel to the gradient. 

This class contains the semilinear Laplacian, the $p$-Laplacian (on its regular set), and the prescribed mean-curvature operator.  Existence and regularity are assumed in the form required by the individual results; we refer to Serrin  \cite{S1969} on the classical quasilinear Dirichlet background.  When an operator is degenerate at $|\nabla u|=0$, differential statements are usually understood on the regular set
$$
 \mathcal R(u)=\{x\in\Omega:|\nabla u(x)|>0\},
$$
together with weak or viscosity formulations applied  appropriately in $\Omega$. 
Moreover, in this manuscript, we often adopt superlevels as the default orientation. When a cited reference is written for a negative solution or for sublevels, the sign change and the corresponding reversal of orientation are usually stated explicitly in this manuscript.

As indicated by the geometry, local and global arguments are treated separately below. In fact, on a regular level,  we can orient $\partial\Omega_t$ by the outer normal $ N_t=-\frac{\nabla u}{|\nabla u|}$ of the superlevel set, and let $\mathcal B_t$ denote its second fundamental form.  For tangent vectors $\tau,\sigma$, one has
$$
 \mathcal B_t(\tau,\sigma)
 =-\frac{D^2u(\tau,\sigma)}{|\nabla u|}.
$$
In particular, when $u\in C^2(\overline\Omega)$ and the Hopf normal derivative is nonzero, differentiation of the boundary condition gives the elementary boundary identity $ \mathcal  B_0=\mathrm{II}_{\partial\Omega}.$
A uniformly convex boundary ensures the existence of a strictly convex collar neighborhood for sufficiently low superlevel sets. However, this condition does not exclude the degeneration of curvature at an interior regular level, nor does it  regulate the geometry of the critical region at the maximum value; see \cite{HNS2016}. In particular, the presence of a unique nondegenerate critical point is even insufficient to determine the convexity of the level sets in between. For example, in the case of a stable solution $u$  to a semilinear equation (i.e., $a\equiv 1$ in \eqref{eq:prototype}) with a superlinear positive nonlinearity $f$, the uniqueness of the nondegenerate critical point of $u$ in a bounded uniformly convex  domain in the plane was proven in the seminal work \cite{CC1998}, and hence sufficiently high superlevel set of $u$ are uniformly convex. In contrast, \cite{Z2026} constructed a counterexample to the convexity of superlevel sets.

This survey is organized around two complementary principles. Positive results are classified based on the variants of convex structures, upon which the maximum principle operates. Conversely, counterexamples are categorized according to how a strict geometric defect in convexity is made in an exact solution. In the current manuscript, a \emph{branch} means a specified selection of solutions of the same boundary value problem, such as a minimal branch, a constrained minimizer, etc.  The counterexamples are typically realized by moving from an approximate solution  to an exact solution, and throughout this transition one must carry over the strict defect.  Note that, a small PDE residual does not by itself produce a nearby solution, and $C^0$-convergence can preserve a finite midpoint defect but not a sign of curvature.

Convex rings are not the main setting in the current survey.  Their two constant boundary components often provide a nonvanishing-gradient foliation and make a global comparison possible, while a one-boundary convex domain instead terminates at a critical maximum set which might not be connected a priori.  This emphasis is consistent with the classical division of level-set methods, for instance surveyed by Kawohl in \cite[Introduction \& Section III.11]{Kawohl1985}.

The aim of this work is not to present an exhaustive bibliography nor to advance a necessary-and-sufficient classification. Instead, it provides a targeted analysis of a restricted class of mechanisms for which the corresponding calculations allow direct comparison, and it describes the specific conditions under which these mechanisms no longer determine the global geometry of the level sets. This survey aims to briefly summarize the author's viewpoint and current understanding of the topic. Indeed, in the opinion of the author, during the age of AI  it is increasingly important to clearly lay out the reasoning process and understanding, and communicate them to the community, rather than leaving readers to guess them.

\medskip

\noindent{\bf AI disclaimer}: The author used ChatGPT Pro 5.6 along with Overleaf AI to polish the initial manuscript, while the main ideas and interpretations were developed and expressed solely by the author. Specifically, AI tools were used to support the preparation of the figure.

\section{Positive results}

\subsection{Power concavity/convexity and maximum principles}\label{sec:power}

We begin by introducing transformations, namely the composition method.
Let $v=\Phi(u)$ with $\Phi'>0$.  Then
$$
 D^2v=\Phi'(u)D^2u+\Phi''(u)\,Du\otimes Du.
$$
If $\tau\perp Du$, the last rank-one term vanishes as
$$
 D^2v(\tau,\tau)=\Phi'(u)D^2u(\tau,\tau).
$$
Thus, concavity of $v$ implies convexity of every regular superlevel of $u$ for $C^2$-solutions $u$, which is independent of the particular choice of $\Phi$. Moreover,  since concavity is a global property on sets, it also controls the limiting top level.

Let us consider a two-point maximum principle first. For $\lambda\in(0,1)$, let us define the convexity deficit
\begin{equation}\label{eq:convexity-deficit}
 \mathcal C_v(x,y;\lambda)
 =\lambda v(x)+(1-\lambda)v(y)
  -v\bigl(\lambda x+(1-\lambda)y\bigr).
\end{equation}
The function $v$ is concave exactly when $\mathcal C_v\leq0$ for every $x,y$ and $\lambda$.  The phrase \emph{two-point} or \emph{multipoint maximum principle} usually refers to applying the maximum principle to this auxiliary function (or its variant) on a product domain, where $x$ and $y$ are the free points and the third point is their   convex combination.

From the viewpoint of the author, the local calculation behind the convexity maximum principle is, somewhat, independent of the particular choice of the transform.  Indeed, suppose that $\mathcal C_v$ has a positive interior maximum at $(x,y)$ and set
$$
 z=\lambda x+(1-\lambda)y.
$$
Then by the maximality of $\mathcal C_v$, its first-order derivative vanishes at $(x,y)$, implying
\begin{equation}\label{eq:three-gradients}
 Dv(x)=Dv(y)=Dv(z)=:\mathbf {p}\in \mathbb R^n.
\end{equation}
Moreover, for variations $x+s\xi$ and $y+s\eta$, the second derivative test gives
\begin{align}
 &\lambda D^2v(x)[\xi,\xi]
 +(1-\lambda)D^2v(y)[\eta,\eta] \notag\\
 &\qquad
 -D^2v(z)[\lambda\xi+(1-\lambda)\eta,
                    \lambda\xi+(1-\lambda)\eta]
 \leq0;\label{eq:block-second-variation}
\end{align}
see e.g. \cite{K1983}. 
Thus the choice $\xi=\eta=\mathbf {e}_i$ already yields an arithmetic-mean trace inequality.

Kennington in \cite[Theorems 3.1]{Kennington1985} replaces these  variations by variations adapted to the elliptic coefficient matrix, and the resulting scalar comparison involves a harmonic mean.  His Theorem 3.3 then applies that principle to $v=u^\alpha$, including the boundary-exclusion condition needed for zero Dirichlet data.  To a certain extent, this provides part of the conceptual basis underlying the term "two-point maximum principle."

\medskip
\paragraph{\it Torsion functions} We consider some examples, starting with torsion functions. Let
$$
 -\Delta u=1\quad\text{in }\Omega,
 \qquad u=0\quad\text{on }\partial\Omega.
$$
Set $w=\sqrt u$.  Since $u=w^2$,
\begin{equation}\label{eq:sqrt-torsion}
 \Delta w+\frac{|Dw|^2}{w}+\frac1{2w}=0.
\end{equation}
Makar--Limanov  \cite{ML1971} proved square-root concavity in the planar problem, and Kennington \cite[Theorem 4.1]{Kennington1985} gives the $n$-dimensional convex-domain result for constant right-hand side.  Furthermore, the Hopf boundary estimate yields the requisite boundary exclusion. The contact relations \eqref{eq:three-gradients}--\eqref{eq:block-second-variation} allow one to localize an a priori assumed loss, while the $w^{-1}$ dependence in \eqref{eq:sqrt-torsion} provides the harmonic-mean inequality required for the argument.

Concavity is not yet strict concavity.  The missing strictness is where the   constant-rank theorem belongs in the development. 

\begin{theorem}[{\cite{KL1987}}] \label{thm:KL}
Let $v\in C^4(\Omega)$ solve
$$
 \Delta v=F(v,Dv)>0
$$
with $D^2v\geq0$.  Assume that $F\in C^{2,\alpha}, \ 0<\alpha<1$ satisfies
\begin{equation}\label{eq:KL-structure}
 2F_v^2-FF_{vv}\geq0,
\end{equation}
which is equivalent to convexity of $1/F$ in the $v$-variable for fixed gradient.  Then $D^2v$ has constant rank in $\Omega$.
\end{theorem}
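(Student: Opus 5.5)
We prove this by the standard Caffarelli--Friedman/Korevaar--Lewis strong maximum principle for the smallest eigenvalues. Let $\ell$ be the minimal rank of $D^2v$ in $\Omega$ and $x_0$ a point where it is attained. Since $\Delta v=F>0$, we have $\ell\geq1$. If $\ell=n$ there is nothing to prove, so assume $\ell<n$. The set $\{\operatorname{rank}D^2v=\ell\}$ is closed in $\Omega$ by lower semicontinuity of rank and minimality of $\ell$. It therefore suffices to show it is open; connectedness of $\Omega$ then finishes the proof. Near $x_0$ we introduce
$$
\phi(x)=\sigma_{\ell+1}(D^2v(x)),
$$
the $(\ell+1)$-th elementary symmetric function of the Hessian eigenvalues. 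Then $\phi\geq0$, $\phi(x_0)=0$, and $\phi\in C^{1,1}$ since $v\in C^4$. The goal is a differential inequality
$$
\Delta\phi\leq C\bigl(\phi+|D\phi|\bigr)
$$
in a neighbourhood $U$ of $x_0$. The strong maximum principle for nonnegative supersolutions then gives $\phi\equiv0$ in $U$, so the rank stays $\ell$ there.

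To derive the inequality, fix $x\in U$ and rotate coordinates so that $D^2v(x)$ is diagonal with $v_{11}\leq\dots\leq v_{nn}$. Split the indices into the \emph{bad} set $B=\{1,\dots,n-\ell\}$, where the eigenvalues are small (of size $O(\phi)$), and the \emph{good} set $G$, where they are bounded below. As in Korevaar--Lewis, we have
$$
\phi\sim\sum_{i\in B}v_{ii},
$$
and modulo $O(\phi+|D\phi|)$, $\Delta\phi$ reduces to
$$
\sigma_\ell(G)\sum_{i\in B}\Delta v_{ii}\;-\;2\,\sigma_\ell(G)\sum_{i\in B,\,j\in G}\frac{v_{ijk}^2}{v_{jj}}+\cdots,
$$
where the second term is the negative contribution coming from the concavity of $\sigma_{\ell+1}^{1/(\ell+1)}$ structure. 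We differentiate the equation twice in a bad direction $i$:
$$
\Delta v_{ii}=F_vv_{ii}+F_{p_k}v_{kii}+F_{vv}v_i^2+2F_{vp_k}v_iv_{ki}+F_{p_kp_m}v_{ki}v_{mi}.
$$
All terms containing $v_{ii}$ or $v_{ki}$ with $i\in B$ are $O(\phi)$. The gradient terms $F_{p_k}v_{kii}$ are absorbed into $|D\phi|$. What remains is the dangerous term $F_{vv}v_i^2$, which is not small a priori.

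The main obstacle is controlling $\sum_{i\in B}F_{vv}v_i^2$ using the negative third-derivative term together with $2F_v^2-FF_{vv}\geq0$. The key observation is that differentiating $\Delta v=F$ once in direction $i\in B$ gives
$$
\sum_k v_{kki}=F_vv_i+O(\phi+|D\phi|).
$$
Moreover, since $D\phi\approx\sum_{i\in B}Dv_{ii}$ and $v_{iik}$ with $i,k\in B$ is controlled by $|D\phi|$, we get
$$
\sum_{j\in G}v_{jji}=F_vv_i+O(\phi+|D\phi|).
$$
By Cauchy--Schwarz with weights $v_{jj}$, and using $\sum_{j\in G}v_{jj}=F+O(\phi)$,
$$
\sum_{j\in G}\frac{v_{jji}^2}{v_{jj}}\geq\frac{F_v^2v_i^2}{F}-C(\phi+|D\phi|).
$$
Then
$$
\Delta\phi\lesssim\sigma_\ell(G)\sum_{i\in B}\Bigl(F_{vv}-\frac{2F_v^2}{F}\Bigr)v_i^2+C(\phi+|D\phi|),
$$
where the coefficient $2$ comes from the mixed $v_{iji}$ terms. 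The bracket is $\leq0$ exactly by \eqref{eq:KL-structure}, that is, by convexity of $1/F$. This yields $\Delta\phi\leq C(\phi+|D\phi|)$, since the remaining terms have a favourable sign.

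The delicate bookkeeping is to justify the bound
$$
-2\sum_{i\in B,\,j\in G}\frac{v_{iji}^2}{v_{jj}}\leq-\frac{2F_v^2}{F}\sum_{i\in B}v_i^2+\ldots
$$
with the correct constant. In particular one must check that $v_{iji}=v_{iij}$ for $j\in G$ versus $v_{jji}$ are handled consistently. I would first try the Korevaar--Lewis trick of working with $\sum_{i\in B}$ of the smallest eigenvalues through a perturbation $D^2v+\epsilon I$, letting $\epsilon\to0$, to avoid nondifferentiability of eigenvalues. I would also use the $C^{2,\alpha}$ regularity of $F$ to keep all error constants uniform on $U$.
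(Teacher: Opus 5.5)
Your argument is correct and follows the route the paper relies on. The survey does not reprove Theorem~\ref{thm:KL}; it cites \cite{KL1987} and, in Section~\ref{sec:microscopic}, sketches exactly this scheme: the test function $\sigma_{r+1}$, a good/bad eigenvalue splitting, the estimate $\mathcal L\varphi\le C(\varphi+|D\varphi|)$, the strong minimum principle, and an open--closed argument. Your Cauchy--Schwarz step over $G$, combined with \eqref{eq:KL-structure}, is the right way to absorb the $F_{vv}v_i^2$ term.

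Two small clean-ups. The worry in your last paragraph goes away once you keep only the $k=j$ components $v_{ijj}=v_{jji}$ of $\sum_k v_{ijk}^2$ and drop the remaining nonpositive contributions; this is what you already did in the Cauchy--Schwarz display. Also, no $\epsilon$-perturbation is needed: $\sigma_{\ell+1}(D^2v)$ is a polynomial in the entries of $D^2v$, so it is $C^2$ (not merely $C^{1,1}$) when $v\in C^4$.
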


This result extended the original planar semilinear mechanism by Caffarelli--Friedman \cite{CF1985}.  To apply it to torsion, set $v=-\sqrt u<0$.  Equation \eqref{eq:sqrt-torsion} becomes
$$
 \Delta v=-\frac{|Dv|^2+1/2}{v}=:F(v,Dv)>0,
 \qquad
 \frac1F=-\frac{v}{|Dv|^2+1/2}.
$$
For fixed $Dv$, the mapping $v \mapsto 1/F$ is affine, and therefore \eqref{eq:KL-structure} is satisfied with equality; in fact, this is usually the standard to find out the transformation that maps $u$ to $v$.  Korevaar--Lewis then deformed a ball through smooth strictly convex domains by a continuity method.  The explicit ball solution starts with $D^2v>0$\footnote{See also \cite{GNN1979} for the convexity of the level sets in balls for general nonlinearity. Note that here one requires not only the convexity of the superlevel sets of $v$, but also the function $v$ itself in the unit ball.}, and boundary estimates give a uniform collar neighborhood along the deformation for which the Hessian is always strictly positive. Then  Theorem~\ref{thm:KL} prevents a first interior rank loss.  Their application  proves strict concavity of $\sqrt u$ in the smooth strictly convex case.

The two proofs have different strengths.  Kennington's maximum principle directly gives global concavity and survives approximation to general convex domains.  Constant rank upgrades an already entered convexity cone to strictness, but it needs a strict anchor and a deformation, or continuity, argument. For further quantitative estimates, we refer to  \cite{MSY2012} and some other subsequent work. 

\medskip
\paragraph{\it The first eigenfunction}

Let $\phi_1>0$ solve
$$
 -\Delta\phi_1=\lambda_1\phi_1\quad\text{in }\Omega,
 \qquad \phi_1=0\quad\text{on }\partial\Omega.
$$
For $v=\log\phi_1$,
$$
 \Delta v+|Dv|^2+\lambda_1=0.
$$
At a contact triple, the gradient term again agrees at all three points by \eqref{eq:three-gradients}.  

Brascamp and Lieb [3, Theorem 6.1] originally proved that $\phi_1$ is log-concave using a heat-semigroup approach derived from applying the Pr\'ekopa--Leindler inequality to a product representation of the solution. Nevertheless, the same conclusion can also be obtained by adapting the argument of Korevaar--Lewis \cite{KL1987}. 

\medskip
\paragraph{\it Sublinear source  terms}
Now suppose
\begin{equation}\label{eq:sublinear-power}
 -\Delta u=ku^\gamma\quad\text{in }\Omega,
 \qquad u>0\quad \text{ in }\ \Omega,
 \qquad u=0\quad \text{ on } \ \partial\Omega,
\end{equation}
where $k>0$ and $0\leq\gamma<1$.  Write $u=v^{\frac 1 \alpha}$.  A direct differentiation gives
$$
 \Delta v
 +\left(\frac1\alpha-1\right)\frac{|Dv|^2}{v}
 +\alpha k\,v^{\frac{\gamma+\alpha-1} \alpha}=0.
$$
The exponent is selected by requiring the last term to have the same $v^{-1}$ dependence on the gradient term:
$$
 \frac{\gamma+\alpha-1}{\alpha}=-1
 \quad\Longleftrightarrow\quad
 \alpha=\frac{1-\gamma}{2}.
$$
Hence
$$
 \Delta v  +\frac{1-\alpha}{\alpha}\frac{|Dv|^2}{v}
 +\frac{\alpha k}{v}=0 \qquad \text{ for }\ 
 v=u^{\frac{1-\gamma} 2}.
$$
Kennington \cite[Theorem 4.2]{Kennington1985} states that, on a bounded convex domain satisfying the interior sphere condition, i.e. every boundary point admits a tangent ball contained in the domain, the positive zero-Dirichlet solution of \eqref{eq:sublinear-power} is $\alpha$-concave with $\alpha=(1-\gamma)/2$.  Therefore, his theorem  gives exactly the conclusion: 
$$
 u^{\frac{1-\gamma} 2}\quad \text{is concave}.
$$
Observe that the endpoint $\gamma=0$ recovers the torsion case. Also when $\gamma\to 1^-$, one has $\alpha\to 0^+$, and the normalized transforms $(u^\alpha-1)/\alpha$ converges to $\log u$ for each fixed $s>0$, formally connecting the power and logarithmic transformations.

\medskip
We finally remark that, the transform concavity proves a structural theorem for every solution covered by its assumptions.  A different positive mechanism, starting from a particularly selected branch,  was given by   Gladiali--Grossi \cite{GG2004}.   The mechanism is based on perturbation together with asymptotic structure, but not an ordinary Hessian or level-curvature continuity method, and its conclusion is specially on branches and is not for every positive solution of the semilinear operator. For further results on constant-rank theorem, see e.g. \cite{CGM2007,BGMX2011} and some other subsequent work. 

\subsection{Quasiconcave envelopes}\label{sec:envelope}

A suitable concavity transform  may not always exist. In particular, it is often necessary to control the concavity/convexity properties of the transformed function  in a collar neighbhorhood of the boundary, while simultaneously ensuring compatibility with additional structural assumptions such as \eqref{eq:KL-structure}.  Alternatively, quasiconcavity may be characterized directly in terms of the function's superlevel sets. This idea was earlier employed, for instance,  in \cite{L1977} (up to a sign convention). 

We define the quasiconcave envelope of $u$ as
$$
 u^*(x)=\sup\{t:x\in\operatorname{conv}\{u\geq t\} \},
$$
where $\operatorname{conv}\{u\geq t\}$ denotes the convex hull of the set $\{u\geq t\}$. 
Then $u^*\geq u$, and $u^*=u$ exactly when every superlevel of $u$ is convex. 
In contrast to the deficit \eqref{eq:convexity-deficit}, the envelope can admit multiple contact points at the same level set, for which the lengths of the associated gradients are not required to coincide. Consequently, the corresponding second-order normal and tangential estimates typically involve distinct weighting factors.

The foundational viscosity idea of using envelope was developed by Alvarez, Lasry and Lions \cite{ALL1997}.
For a more transparent argument, we refer to  \cite{CS2003}, where Colesanti and Salani identified the contact representation in a convex ring with $|Du|>0$ in Proposition 3.1, and their Theorem 3.2 gave  the viscosity Laplacian inequality for $u^*$. Namely, whenever a $C^2$-test function touches $u^*$ from above, its normal and tangential second derivatives satisfy certain lower bounds at the contact point.  The local formulas are useful beyond rings, but their global comparison theorem uses the two boundary components and the nonvanishing-gradient assumption.

To be more specific, let us see a planar calculation. At a regular point of a convexified level in the plane, two contacts already yield the geometry.  Let
$$
 x=\lambda x_1+(1-\lambda)x_2,
 \qquad  u(x_1)=u(x_2)=t,
 \qquad 0<\lambda<1,
$$
where $x_1$ and $x_2$ lie on a common support line of $\{u\geq t\}$.  If $\mathbf e\in \mathbb S^{1}$ is the unit normal pointing toward increasing values of $u$, then
$$
 Du(x_i)=a_i \mathbf {e},  \qquad a_i=|Du(x_i)|>0,
 \qquad i=1,2.
$$

\begin{figure}[ht]
\centering
\begin{tikzpicture}[scale=1.05,>=Latex]
\draw[dashed] (-3.6,0)--(3.6,0) node[right] {common support line};
\draw[thick] plot[smooth] coordinates {(-3.2,1.5) (-2.7,0.45) (-2.2,0) (-1.7,0.45) (-1.2,1.35)};
\draw[thick] plot[smooth] coordinates {(1.0,1.35) (1.5,0.45) (2.0,0) (2.5,0.45) (3.0,1.5)};
\fill (-2.2,0) circle (2pt) node[below] {$x_1$};
\fill (2.0,0) circle (2pt) node[below] {$x_2$};
\fill (-0.1,0) circle (2pt) node[below] {$x$};
\draw[->,thick] (-2.2,0)--(-2.2,1.0) node[left] {$a_1\mathbf e$};
\draw[->,thick] (2.0,0)--(2.0,1.0) node[right] {$a_2\mathbf e$};
\draw[<->] (-2.2,-0.55)--(-0.1,-0.55) node[midway,below] {$1-\lambda$};
\draw[<->] (-0.1,-0.55)--(2.0,-0.55) node[midway,below] {$\lambda$};
\end{tikzpicture}
\caption{Two equal-level contacts with a common support line.  The gradients are parallel, but their magnitudes need not coincide.}
\label{fig:two-contact}
\end{figure}

Now translate the support line by a distance $h$ along the direction $\mathbf e$.  If the two contacts move by normal distances $h_1,h_2$, then up to higher order terms,  
$$
 \lambda h_1+(1-\lambda)h_2=h,
 \qquad
 a_1h_1=a_2h_2=a_*h,
$$
and hence
$$
 a_*=\left(\frac\lambda{a_1}+\frac{1-\lambda}{a_2}\right)^{-1}.
$$
This is precisely an interpretation of \cite[Proposition 3.5]{CS2003} in the plane.  Moreover, a second-order Taylor expansion in the common normal gives
$$
 u^*_{ee}(x)
 \geq a_*^3\left(
 \frac\lambda{a_1^3}u_{ee}(x_1)
 +\frac{1-\lambda}{a_2^3}u_{ee}(x_2)
 \right),
$$
which is the planar form of  \cite[Proposition 3.6]{CS2003}.

In addition, tangential estimate comes from convex geometry.  The weighted Minkowski sum of two convex sets is 
$$\lambda K_1+(1-\lambda)K_2=\{\lambda x_1+(1-\lambda)x_2:x_i\in K_i\}.$$  
Suppose that the contacted level curves have positive curvatures $\kappa_1,\kappa_2$. Then their radii of curvature add linearly under this combination, and hence
$$
 \kappa_*
 \leq \left(\frac\lambda{\kappa_1}+\frac{1-\lambda}{\kappa_2}\right)^{-1}.
$$
Since $u_{\tau\tau}(x_i)=-a_i\kappa_i,$
this becomes a tangential Hessian estimate.  Via this idea,  \cite[Proposition 3.7]{CS2003} gives the corresponding viscosity tangential-Laplacian bound, and then \cite[Theorem 3.2]{CS2003} combines it with the normal estimate.

Next one passes from local contact formulas to a PDE comparison. 
For the Laplace problem in a convex ring, Colesanti--Salani prove that the envelope is a viscosity subsolution and then compare it with the unique solution; see  \cite[Theorems 4.2 and 4.4]{CS2003}. Thus the logical chain is 
$$
 \begin{gathered}
 \text{equal-level contacts}
\  \Rightarrow \ 
 \text{normal and tangential estimates}\\
\ \Rightarrow \ 
 u^*\text{ is a subsolution}
\  \Rightarrow \  u^*=u.
 \end{gathered}
$$
The first two arrows are local, while the final arrow needs boundary equality, comparison, uniqueness of the solution, together with control of critical levels.  Especially, in a convex domain with one boundary component, the regular foliation terminates at a maximum, and the ring theorem therefore cannot simply be applied as a globalization result in convex domains.

Furthermore, envelope constructions can be also used to prove concavity of an appropriately transformed solution, as well as convexity of the superlevel sets of the original solution, linking to the power concavity/convexity discussed in the previous subsection. As mentioned above, an early viscosity-based framework was introduced by Alvarez, Lasry and Lions \cite{ALL1997}, and for subsequent developments of this approach we refer, for instance, to \cite{CS2006,BLS2009,BS2013}. These constructions also have applications to Brunn--Minkowski inequalities for variational functionals.  

We end this subsection by a comparison between the constant-rank theorem and the quasiconcave envelopes. 
Let $v$ be a smooth function and $\tau$ a tangent direction. Then the second directional derivative can be recovered as the limit of the associated symmetric second difference:
\begin{equation}\label{eq:coalescence}
 \lim_{s\to0}
 \frac{v(x+s\tau)+v(x-s\tau)-2v(x)}{s^2}
 =D^2v(x)[\tau,\tau].
\end{equation}
In this sense, a direction $\tau$ satisfying $D^2v(x)[\tau,\tau]=0$ may be interpreted as the infinitesimal version of a vanishing symmetric two-point  deficit. By contrast, a finite chord connecting two points on the same level set, or on distinct connected components of a regular level, can reflect global geometric features and may link spatially separated regions.  Such a configuration does not, in general, arise as the limit of  point pairs. This separation between infinitesimal degeneracy and finite two-point contact also partially clarifies why constant-rank theorem  cannot uniformly substitutes for arguments that depend essentially on finite-contact configurations.

\subsection{Microscopic propagation of level curvature}\label{sec:microscopic}

Now we consider the curvature of the level sets.  Suppose that $|Du|>0$ and choose an orthonormal frame with
$$
 \mathbf e_n=\frac{Du}{|Du|},
 \qquad \mathbf e_1,\dots,\mathbf e_{n-1}\perp Du.
$$
Note that
$$
  D_{\mathbf p}\big(a(|\mathbf {p} |)\mathbf {p}\big)\bigg|_{\mathbf {p}=Du}
 =a(|Du|)I+ |Du| a'(|Du|)  \frac{Du}{|Du|}\otimes \frac{Du}{|Du|},
$$
where the tangential eigenvalue is $a(|Du|)$ and the normal eigenvalue is $a(|Du|)+|Du|a'(|Du|)$.  Hence
$$
 \operatorname{div}\big(a(|Du|)Du\big)
 =a(|Du|)\sum_{\alpha=1}^{n-1}u_{\alpha\alpha}
 +\big(a(|Du|)+|Du|a'(|Du|)\big)u_{nn}.
$$
With
$$
 \mathcal B_{\alpha\beta}=-\frac{u_{\alpha\beta}}{|Du|},
 \qquad H=\operatorname{tr}\mathcal B,
$$
the   equation becomes
\begin{equation}\label{eq:curvature-balance}
 a(|Du|)|Du|H-\big(a(|Du|)+|Du|a'(|Du|)\big)u_{nn}=\mathcal F(u,|Du|).
\end{equation}
Thus the operator does not act on the full Hessian isotropically; indeed the tangential curvature and normal second derivative carry different coefficients.

The ratio
$$
 \Theta(q)=\frac{a(q)+qa'(q)}{a(q)}
$$
measures the diffusion from the normal direction to the tangential one.  For the Laplacian, $p$-Laplacian, and prescribed mean curvature, \eqref{eq:curvature-balance} gives respectively
\begin{align}
 qH-u_{nn}&=f(u),\nonumber\\
 q^{p-1}H-(p-1)q^{p-2}u_{nn}&=f(u),\nonumber\\
 \frac{qH}{\sqrt{1+q^2}}-
 \frac{u_{nn}}{(1+q^2)^{3/2}}&=c,\label{eq:mc-balance}
\end{align}
where $q=|Du|$. 

Now we consider the local hodograph. At a regular point, we rotate coordinates so that $u_n=|Du|>0$ and $u_\alpha=0$ for $\alpha<n$.  Use the level value as a coordinate and write
$$
 x_n=v(y,t), \qquad y=(x_1,\dots,x_{n-1}),
 \qquad t=u(x).
$$
We differentiate $u(y,v(y,t))=t$ and obtain 
$$
 v_t=q^{-1},
 \qquad  v_{\alpha\beta}=-\frac{u_{\alpha\beta}}q=\mathcal B_{\alpha\beta},
 \qquad  v_{\alpha t}=-\frac{u_{\alpha n}}{q^2},
 \qquad  v_{tt}=-\frac{u_{nn}}{q^3}.
$$
Thus $\mathcal B=D_y^2v$.  Substitution into \eqref{eq:curvature-balance} yields
$$
 a(q)q\,\operatorname{tr}D_y^2v
 +\big(a(q)+qa'(q)\big)q^3v_{tt}
 =\mathcal F(t,q),
 \qquad q=v_t^{-1}.
$$
This is a local identity at a chosen point on $\{|Du|>0\}$, which does not cross a critical point.

Now we return to  Korevaar's   constant-rank theorem \cite{K1990} on level sets.  It concerns equations that, in an adapted frame, have one coefficient $A(u,q)$ on the tangential trace and another coefficient $B(u,q)$ on the normal second derivative:
\begin{equation}\label{eq:Korevaar-form}
 A(u,q)\left(\Delta u-\frac{u_i u_j}{q^2}u_{ij}\right)
 +B(u,q)\frac{u_i u_j}{q^2}u_{ij}=F(u,q).
\end{equation}
If $u\in C^4$, $Du\neq0$, the regular levels are already locally convex, and
$$
 \left(\sqrt{\frac AB}\right)_{qq}\geq0,
 \qquad  \left(-\frac{F}{Bq^2}\right)_{qq}\leq0,
$$
then the level-set second fundamental forms have constant rank on the connected domain.  This is   stated in \cite[Theorems 1 and 6]{K1990} for locally convex sublevel sets and should be applied for $-u$ here.  In addition, his Remark 2 checks the prescribed $p$-Laplacian and prescribed mean-curvature examples, and Theorem 11 combines the rank theorem with a continuation argument in convex rings.
For the isotropic divergence-form equation \eqref{eq:prototype}, the formal identification in \eqref{eq:Korevaar-form} is
$$
 A=a(q),
 \qquad B=a(q)+qa'(q),
 \qquad F=-\mathcal F(u,q).
$$

Now assume $\mathcal B=D_y^2v\geq0$ on a connected regular region, and let $r$ be its minimum rank.  The standard test function is
$$
 \varphi=\sigma_{r+1}(\mathcal B),
$$
where $\sigma_{r+1}$ is the $(r+1)$-th elementary symmetric function of the  principal curvatures. 
Near a minimum-rank point, one splits the eigenvalues into positive ``good'' and small ``bad'' groups.  Differentiating the PDE and using the structural inequalities, one obtains a local estimate of the form
$$
 \mathcal L\varphi
 \leq C\bigl(\varphi+|D\varphi|\bigr),
$$
where $\mathcal L$ is uniformly elliptic on the regular region.  Since $\varphi\geq0$ and vanishes at a minimum-rank point, the strong minimum principle makes its zero set open, while continuity makes it closed. We also record  \cite{X2011} by Xu, which gives a quasilinear microscopic theorem in this direction under its stated structural condition. See also \cite{HMO2010}. 

The conclusion is then by propagation. If the rank is deficient at a given point, the theorem yields that this deficiency persists under propagation, and it does not generate an additional positive eigenvalue. Consequently, auxiliary inputs, such as a boundary collar argument, are required to rule out the lower-rank alternative a priori. Moreover, the theorem neither provides a mechanism for linking distinct regular components nor permits extension across the critical sets.

\bigskip
Finally, let us briefly summarize these three mechanisms. 
\begin{enumerate}[(i)]
\item Theorem~\ref{thm:KL} concerns $D^2\Phi(u)$ and the concavity/convexity of a transformed function.
\item The envelope formulas  concern separated equal-level contacts and can detect a finite chord which might be invisible to a one-point rank test.
\item Korevaar's theorem concerns the $(n-1)\times(n-1)$ second fundamental form of regular levels and assumes $Du\neq0$.
\end{enumerate}
Equation \eqref{eq:coalescence} somehow relates the last two mechanisms infinitesimally, but it does not see whether  every first loss of quasiconcavity is a discrete set or not in the continuity argument; this is indeed an interesting question when there exists a counterexample to the equation.

\section{Counterexamples for the convexity of level sets}\label{sec:counterexamples}

A counterexample concerning the convexity of level sets is typically constructed by introducing an almost-solution and then establishing two logically independent estimates.  Firstly, one computes a residual
\begin{equation}\label{eq:abstract-residual}
 \mathscr R_\epsilon
 :=\mathscr F_\epsilon[V_\epsilon]
\end{equation}
for an almost-solution $V_\epsilon$.  Secondly, one proves an estimate comparing $V_\epsilon$ with a real solution $u_\epsilon$.  Note that, the smallness of \eqref{eq:abstract-residual} alone gives no such comparison, and it must be converted into an order interval via ordered sub- and supersolutions (or barriers), or inverted through a linearized operator with a positive spectral gap, i.e. the coercivity needed for an error estimate.  Additionally, the comparison norm must be strong enough for the chosen geometric defect.

To illustrate clearly, let us see an elementary example, which gives a transfer from the defect of the quasiconcavity from $V$ to that of $u$. 
\begin{lemma}\label{lem:finite-defect-transfer}
Let $\Omega$ be a convex domain with $P_\pm\in \Omega$ and $M=\frac{P_-+P_+} 2$.  Suppose that, for some $t\in\R$ and $\eta>0$,
\begin{equation}\label{eq:superlevel-transfer}
 V(P_\pm)\geq t+2\eta,
 \qquad
 V(M)\leq t-2\eta,
 \qquad
 \|u-V\|_{L^\infty}\leq\eta.
\end{equation}
Then the superlevel set $\{u>t\}$ is not convex.  Moreover, with all inequalities reversed, the analogous statement holds for a sublevel set.
\end{lemma}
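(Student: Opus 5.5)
The argument is a pointwise transfer of inequalities followed by the midpoint characterization of convexity. No PDE input is needed; everything sits in the three inequalities of \eqref{eq:superlevel-transfer}. I will read $\|u-V\|_{L^\infty}\leq\eta$ as a pointwise bound $|u(x)-V(x)|\leq\eta$ at every point of $\Omega$. This is justified for continuous $u,V$, which is the situation in all intended applications. Otherwise one would only need the bound at the three points $P_\pm$ and $M$.

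The first step is to evaluate at the two endpoints:
$$
 u(P_\pm)\geq V(P_\pm)-\eta\geq t+\eta>t,
$$
so both $P_+$ and $P_-$ belong to $\{u>t\}$. The second step is to evaluate at the midpoint:
$$
 u(M)\leq V(M)+\eta\leq t-\eta<t,
$$
so $M\notin\{u>t\}$. Since $M=\tfrac12(P_-+P_+)$ lies on the segment joining two points of the set but not in the set, $\{u>t\}$ is not convex. Convexity of $\Omega$ guarantees $M\in\Omega$, so $u(M)$ is defined and the argument stays inside the domain. Note that the conclusion retains a margin of $\eta$ on each side, so it applies to every level in $(t-\eta,t+\eta)$, not just to $t$.

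For the reversed statement, assume $V(P_\pm)\leq t-2\eta$ and $V(M)\geq t+2\eta$. The same two evaluations give $u(P_\pm)\leq t-\eta<t$ and $u(M)\geq t+\eta>t$. Hence the sublevel set $\{u<t\}$ contains $P_\pm$ but not their midpoint, so it is not convex.

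There is no genuine obstacle here. The only points needing care are interpreting the $L^\infty$ bound pointwise at $P_\pm$ and $M$, and using the convexity of $\Omega$ to ensure $M\in\Omega$. The substantive difficulty, obtaining the estimate $\|u-V\|_{L^\infty}\leq\eta$ from a small residual, is deliberately excluded from the lemma and must be handled by barriers or a spectral gap.
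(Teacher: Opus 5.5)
Your proof is correct and follows essentially the paper's argument: the paper also reads off $u(P_\pm)>t$ and $u(M)<t$ from the three inequalities, and this alone shows that $\{u>t\}$ is not convex. The paper additionally records the midpoint deficit $\frac{u(P_+)+u(P_-)}{2}-u(M)\geq 2\eta$, which the conclusion does not need; your remarks on reading the $L^\infty$ bound pointwise and on $M\in\Omega$ are sound.
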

\begin{proof}
Observe that, equation \eqref{eq:superlevel-transfer} gives $u(P_\pm)>t$ and $u(M)<t$. Since $M$ is the midpoint of $P_-$ and $P_+$ while
$$\frac{V(P_+)+V(P_-)}{2} - V(M)\ge 4\eta>0,$$
then 
$$\frac{u(P_+)+u(P_-)}{2} - u(M)\ge 2\eta>0.$$
The conclusion follows.
\end{proof}

Therefore, following this idea, one usually needs to answer the following questions:
\begin{enumerate}
\item What is the strict  defect? Typically, this arises from a one-dimensional solution to the same equation adapted to the (half-)width of the domain, whereas different equations involve distinct mechanisms that generate the defect.

\item What equation does the error satisfy, or whether there are ordered barriers  available? This usually comes from the  estimate on the residual \eqref{eq:abstract-residual}, or a suitable barrier argument. 

\item Which exact solution or branch is selected? 

\item In which topology is the defect transferred, and what final scaling or domain cleanup is needed? 
\end{enumerate}

Wang \cite{W2014} answered these questions by a direct nonlinear sandwich.  The author in \cite{Z2026} applied a slowly modulated one-dimensional family together with a spectral barrier which is suitable for semilinear equations, while in \cite{LLMQ2026} it is necessary to firstly insert an additional corrector to the one-dimensional profiles before comparison. Moreover, we remark that, even though the construction in Hamel--Nadirashvili--Sire \cite{HNS2016} relies on selecting an appropriate branch and does not involve an approximate PDE residual, some underlying ideas are similar.

Let us further explain these four counterexamples in detail below.

\subsection{\cite{W2014}: Comparison around an almost-solution}

The most illustrative example of this concept was implicitly provided by Wang \cite{W2014}\footnote{In a personal communication, Wang told the author that what we present here was not his original idea. However, his counterexample does fit our philosophy. }, which we present first. For the positive superlevel orientation, consider
$$
 \begin{cases}
 -\diver\left(\dfrac{Du}{\sqrt{1+|Du|^2}}\right)= 1&\text{in }\Om,\\
 u>0&\text{in }\Om,\\
 u=0&\text{on }\partial\Om.
 \end{cases}
$$
On a regular superlevel, recall \eqref{eq:mc-balance} gives
$$
 \frac{qH}{\sqrt{1+q^2}}
 -\frac{u_{nn}}{(1+q^2)^{3/2}}=1. 
$$
Then the uniform convexity of the boundary still gives a strictly convex collar neighborhood in a uniformly convex domain.   

Following the original negative variable in \cite{W2014}, we write
$$
 \widetilde u=-u<0,
 \qquad
 \diver\left(\frac{D\widetilde u}{\sqrt{1+|D\widetilde u|^2}}\right)=1.
$$
Then $\{u>t\}=\{\widetilde u<-t\}$, and a nonconvex sublevel of $\widetilde u$ is exactly the desired nonconvex superlevel of $u$.

Let us see how the defect enters locally. 
Up to smoothing only away from the selected bad level set, an almost-solution  can be written as
\begin{equation}\label{eq:Wang-profile}
 w(t,r)=\sqrt{1-\gamma(t)^2}-\sqrt{1-r^2}
\end{equation}
inside a convex domain bounded by $r=\pm\gamma(t)$; for each fixed $t$, this is indeed a $1$-dimensional solution to the equation.  Moreover, the level set
$$
 \mathcal L=\{w=-1/4\}
$$
has a fixed convex-hull gap.  Indeed, if $\mathcal C$ is the boundary of the convex hull of the relevant sublevel, set the deficit
\begin{equation}\label{eq:Wang-deficit}
     d(\mathcal L,\mathcal C)=\sup\{r_2-r_1:(t,r_1)\in \mathcal  L,(t,r_2)\in \mathcal C,r_2\geq r_1>0\}>0.
\end{equation}
Note that, by stretching only the long variable
$$
 \Om_\epsilon=\{(t,r):(\epsilon t,r)\in\Om\},
 \qquad w_\epsilon(t,r)=w(\epsilon t,r),
$$
the geometric defect is unchanged, while a direct calculation gives
\begin{equation}\label{eq:Wang-curvature-asymptotic}
 \diver\left(\frac{Dw_\epsilon}{\sqrt{1+|Dw_\epsilon|^2}}\right)
 =1+O(\epsilon^2).
\end{equation}

Conversely, we consider a  modification of \eqref{eq:Wang-profile} with larger  radius $\sqrt{1+\delta}$, where $\delta=\epsilon^{1/2}$. This yields a supersolution $v_\epsilon$.  Moreover, the Perron method, taking the envelope of subsolutions below an ordered supersolution, produces an exact zero-Dirichlet solution $\widetilde u_\epsilon$ with constant right-hand side $1-\epsilon$.  Now comparison principle tells
$$
 v_\epsilon\geq\widetilde u_\epsilon\geq w_\epsilon.
$$
The barriers converge, whereas the defect has a fixed positive value.  Consequently the exact level $L'_\epsilon$ satisfies
$$
 d(L'_\epsilon,C'_\epsilon)
 \geq\frac12d(L,C)>0
$$
for small $\epsilon$.  Via a suitable dilation, one restores the constant $1$, and smooth uniformly convex approximation of the domain appropriately  preserves the strict finite defect as well.

The important scale distinction is that, the PDE residual of the almost-solution is $O(\epsilon^2)$ by \eqref{eq:Wang-curvature-asymptotic}, whereas the explicit supersolution is separated from the almost by the larger quantity $O(\epsilon^{1/2})$.  Nevertheless, no sharp inverse estimate is needed here, since the geometric defect $d(L,C)$ is fixed independently of $\epsilon$; indeed, any comparison error tending to zero is eventually smaller than that deficit.   

\subsection{\cite{Z2026}: Thin domain and spectral barriers}
Roughly speaking, the author used the same  almost-solution philosophy as Wang in \cite{W2014}, while the mechanism is more structured toward stable solutions with superlinear nonlinearities. Specifically, in thin domains and for sufficiently low dimension,  stable solutions remain uniformly close to the corresponding lower-dimensional profile; see e.g.\ \cite{D1993}.

Recall that, for the semilinear Dirichlet problem
$$
 \begin{cases}
 -\Delta u=f(u)&\text{in }\Om,\\
 u>0&\text{in }\Om,\\
 u=0&\text{on }\partial\Om,
 \end{cases}
$$
a solution is \emph{minimal} if it lies below every other positive solution of the same problem.  It is \emph{strictly stable} when the first Dirichlet eigenvalue of $-\Delta-f'(u)$ is positive, equivalently when there is $\nu>0$ such that
$$
 \int_{\Om}|D\phi|^2-\int_{\Om}f'(u)\phi^2
 \geq \nu\int_{\Om}\phi^2
 \qquad \text{ for any} \ \phi\in H_0^1(\Om).
$$
When $f$ is positivity, increasing and strictly convex, one can show that stable solutions are minimal. 
These are branch and spectral properties, which  do not themselves impose a sign on the curvature of every level.

The mechanism to create a defect for an almost-solution is based on a one-dimensional fold.
Let $U_A$ solve
$$
 -U_A''=f(U_A),
 \qquad U_A(0)=A,
 \qquad U_A'(0)=0,
$$
with its first zero at $h(A)$, i.e. $U_A(h(A))=0$ and $U_A(t)>0$ for $0\le t<h(A)$, which is the \emph{half-width} where the solution is positive.  By setting $F(s)=\int_0^sf(\tau)\,d\tau$, one gets from the fundamental theorem of calculus together with integration by parts that
$$
 h(A)=\int_0^A\frac{ds}{\sqrt{2(F(A)-F(s))}}.
$$
A nondegenerate \emph{fold} on the lower stable branch satisfies  $h'(A_c)=0,$ $ h''(A_c)<0$, 
\begin{equation}\label{eq:fold}
\lambda_1\left(-\frac{d^2}{d\eta^2}-f'(U_{A_c})\right)=0
 \ \text{ and } \ 
 \lambda_1\left(-\frac{d^2}{d\eta^2}-f'(U_A)\right)>0
 \quad \text{ when } \ A<A_c\text{ near } A_c.
\end{equation}
For a fixed level $t>0$, let $Y_t(A)$ satisfy $U_A(Y_t(A))=t$.  Then one can show that, along the inverse lower branch $A_-(h)$,
\begin{equation}\label{eq:fold-level-expansion}
 Y_t(A_-(h))
 =Y_c-c_t\sqrt{h_c-h}+o(\sqrt{h_c-h}),
 \qquad c_t>0.
\end{equation}

Now we work on the main body of convex set, which is called slow channel in \cite{Z2026} and generates a formal finite midpoint gap.
Choose a concave function representing the half-width  
$$
 H(X)=h_c-D(X),
$$
where $D>0$ is convex while $\sqrt D$ is strictly concave at three points $X_-<X_0<X_+$ with $X_0=(X_-+X_+)/2$.  Then our formal one-dimensional profile is
$$
 V(X,\eta)=U_{A_-(H(X))}(\eta),
$$
and its upper level height is $Y(X)=Y_t(A_-(H(X)))$.  Now formula \eqref{eq:fold-level-expansion} yields a strict gap
\begin{equation}\label{eq:V}
     G:=\frac{Y(X_-)+Y(X_+)}2-Y(X_0)>0.
\end{equation}
This is the localized defect; compare with \eqref{eq:Wang-deficit} in the case of Wang. 

However, the profile $V$ is not yet a solution of the two-dimensional Dirichlet problem. 
Now we transfer from the profile $V$ to a real solution. 
We first attach smooth uniformly convex caps to the long channel and dilate the slow coordinate to the physical coordinate via $x_1=X/\epsilon$; recall a similar step is done also in Wang \cite{W2014}. Let $\Om_\epsilon$ be the resulting bounded uniformly convex domain.  Choose a stable one-dimensional profile $U=U_{h_*}$ on a slightly wider strip.  Then $0$ and $U(x_2)$ are respectively a subsolution and a supersolution.  Standard monotone iteration  starting from the subsolution $0$ gives the minimal exact solution $u_\epsilon$, with
$$
 0<u_\epsilon(x_1,x_2)\leq U(x_2).
$$
If
$$
 \nu_*=\lambda_1\left(-\frac{d^2}{d\eta^2}-f'(U),(-h_*,h_*)\right)>0,
$$
vertical slicing gives the quantitative stability estimate
\begin{equation}\label{eq:nustar}
\int_{\Om_\epsilon}
 \left(|D\phi|^2-f'(u_\epsilon)\phi^2\right)
 \geq \nu_*\int_{\Om_\epsilon}\phi^2.
\end{equation}
Thus minimality and strict stability pass to the exact realized branch, not merely to the almost-solution.

It remains to estimate the residual by a spectral barrier argument, which corresponds to the sandwich argument of Wang \cite{W2014}.
Set
$$
 v_\epsilon(X,\eta)=u_\epsilon(X/\epsilon,\eta),
 \qquad W_\epsilon=v_\epsilon-V.
$$
Since $V$ solves the one-dimensional equation only in $\eta$, the error satisfies the exact linearized equation
$$
 -\epsilon^2(W_\epsilon)_{XX}-(W_\epsilon)_{\eta\eta}
 -c_\epsilon(X,\eta)W_\epsilon
 =\epsilon^2V_{XX},
$$
where the mean-value coefficient obeys
$$
 0\leq c_\epsilon(X,\eta)\leq f'(U(\eta)).
$$

Let $\phi_*>0$ be the first eigenfunction of $-d^2/d\eta^2-f'(U)$ with eigenvalue $\nu_*>0$.  On a fixed central subchannel, a barrier of the form
$$
 \Phi(X,\eta)=C_0\epsilon^2\phi_*(\eta)
 +C_1\bigl(e^{-\kappa(X-X_L)/\epsilon}
          +e^{-\kappa(X_R-X)/\epsilon}\bigr)\phi_*(\eta),
 \qquad 0<\kappa^2<\nu_*,
$$
dominates both  $\epsilon^2V_{XX}$ and the artificial side-boundary data.  Then the maximum principle applied to $\Phi\pm W_\epsilon$ gives 
\begin{equation}\label{eq:Zhang-C0}
 \|v_\epsilon-V\|_{L^\infty(Q_{I'})}
 \leq C\epsilon^2,
\end{equation}
where $C$ is independent of $\epsilon$.
The spectral gap \eqref{eq:nustar} also provides, in the current argument, the coercivity required to invert the residual operator. This is the main methodological difference from Wang's direct nonlinear sandwich estimate.

Eventually we can transfer the finite defect to the  level sets. 
Since  the selected level $t>0$ is uniformly separated from zero and  $\partial_\eta V$ is also uniformly away from $0$, one can convert \eqref{eq:Zhang-C0} into the upper-height estimate, where $Y_\epsilon$ locally represents the level sets $\{v_\epsilon=t\}$ of the real solution $v_\epsilon$. That is, there exists another constant $C'>0$  independent of $\epsilon$ so that
$$
 |Y_\epsilon(X_j)-Y(X_j)|\leq C'\epsilon^2,
 \qquad j\in\{-,0,+\}.
$$
Therefore, one gets from \eqref{eq:V} that
$$
 \frac{Y_\epsilon(X_-)+Y_\epsilon(X_+)}2-Y_\epsilon(X_0)
 \geq G-2C'\epsilon^2>0
$$
for small $\epsilon$.  Thus if the superlevel sets of $v_\epsilon$ were convex, its upper height would be concave, which leads to a contradiction.  The final domain is already smooth and uniformly convex, and the exact solution is already minimal and strictly stable.

\subsection{\cite{LLMQ2026}: Corrector-enhanced barriers}

The translator construction belongs to the same comparison family, but it begins one step earlier: The naive one-dimensional profile has the wrong geometry and its leading residual must first be canceled.  

To be more precise, the unique negative solution of the following equation is considered in  \cite{LLMQ2026}: 
\begin{equation}\label{eq:translator-negative}
 \begin{cases}
 \displaystyle
 \diver\left(\frac{Du}{\sqrt{1+|Du|^2}}\right)
 =\frac1{\sqrt{1+|Du|^2}}&\text{in }\Om,\\
 u=0&\text{on }\partial\Om.
 \end{cases}
\end{equation}
The natural objects are sublevels $\{u<c\}$ since $u<0$.  Setting $v=-u>0$ converts them to superlevels and puts the equation in the form \eqref{eq:prototype} with $a(q)=(1+q^2)^{-1/2}$ and $\mathcal F(u,q)=(1+q^2)^{-1/2}$.  

Let us start with the one-dimensional profile to the corresponding solution \eqref{eq:translator-negative}:
$$
 U_0(x,y)=-\log\cos(y-c(x))+\log\cos A(x).
$$
This function is bad in the sense that, if the channel $c(x)-A(x)<y<c(x)+A(x)$ is also convex, every sublevel of $U_0$ is convex, which cannot generate the desired defect.  This is why a corrector is mathematically necessary. 

The corrector is constructed as follows. One would like to construct an almost-solution of the form 
$$U_0(x,y)+\tau w(\gamma_q(x),y).$$
Let $q>0$ be small and set
$$
 \gamma_0=\arccos q,
 \qquad \lambda=q^{9/4},
 \qquad \tau=\lambda^2=q^{9/2},
 \qquad \gamma_q(x)=\gamma_0+\lambda x-q^7x^2.
$$
With $g(s)=-\log\cos s$ and $A(s)=s\tan s+\log\cos s$, a direct calculation with the  boundary condition gives an explicit corrector $w(\gamma,y)$, which solves
$$
 w_{yy}-2\tan y\,w_y=\sec^2y+\tan^2\gamma,
 \qquad w(\gamma,\pm\gamma)=0,
$$
and eventually yields the corrected almost-solution
$$
 V_q(x,y)=g(y)-g(\gamma_q(x))+\tau w(\gamma_q(x),y).
$$
The corrector cancels the completing term linear in $\tau$, and then one can find points $P_+,P_-$ and their midpoint $M_q$ such that
$$
 V_q(P_\pm)\leq c_*-c_2q^{3/2},
 \qquad  V_q(M_q)\geq c_*+c_2q^{3/2}.
$$
On the other hand, via a delicate barrier argument, one can get
$$
 |u_q-V_q|
 \leq Cq^2+C|\log q|e^{-cq^{-1/4}}
 =o(q^{3/2}).
$$
Thus one can apply (a sublevel version of) Lemma~\ref{lem:finite-defect-transfer} to conclude the theorem. 

In the end, we remark that,  the corrector is essential to create an almost-solution with the desired finite defect and simultaneously lowers the weighted residual to a scale that the differential barriers can dominate.  Only after this cancellation does the comparison estimate become stronger than the defect scale $q^{3/2}$. 

\subsection{\cite{HNS2016}: A branch selection}

This last counterexample is deliberately placed after the perturbative constructions above, since it does not begin with an almost-solution.  Hamel--Nadirashvili--Sire \cite{HNS2016} indeed constructed one convex-domain problem with two positive solutions of opposite level geometry. Nevertheless, there are some common underlying ideas. 

To be more specific, let $\Om_a\subset\R^2$ be a smooth convex stadium whose central part becomes longer as $a\to\infty$.  The long direction is again $x$ and the transverse direction is $y\in(-1,1)$.  Symmetry and the moving-plane method make the selected solutions even in $x,y$ and decreasing in $|x|,|y|$.

Now one selects the exact branch under a constraint.
On $H_0^1(\Om_a)$ set
$$
 I_a(w)=\frac12\int_{\Om_a}|Dw|^2\,dx-\int_{\Om_a}w\,dx
$$
and impose
\begin{equation}\label{eq:HNS-constraint}
 \mathcal U_a
 =\left\{w\in H_0^1(\Om_a):\int_{\Om_a}g(w)\,dx=1\right\},
\end{equation}
where $g=0$ on $(-\infty,1]$ and $g=1$ on $[2,\infty)$.  A minimizer $u_a$ exists.  The Lagrange multiplier is nonzero and, after the sign is fixed by comparison with torsion, is positive; thus $u_a$ is a classical solution
$$
 -\Delta u_a=f_a(u_a),
 \qquad f_a(s)=1+\mu_a g'(s),
 \qquad \mu_a>0.
$$
 
They next show that, the same equation has a good branch. Indeed, let $v_a$ be the torsion function
$$
 -\Delta v_a=1,
 \qquad v_a=0\text{ on }\partial\Om_a.
$$
The range estimate keeps $v_a\leq1/2$, so $g'(v_a)=0$ and $f_a(v_a)=1$.  Hence $u_a$ and $v_a$ solve the same equation with the same domain and boundary data, and the known square-root concavity makes $v_a$ quasiconcave.

Subsequently,  they also localize the solution via a one-dimensional limit.
The constraint in \eqref{eq:HNS-constraint} implies that the active set
$$
 E_a:=\{u_a>1\}
$$
has horizontal half-length bounded independently of $a$, while there is a lower bound $C_y>0$ on the transverse half-width.  Away from this central  region and away from the caps, one has $g'(u_a)=0$, and  $u_a$ solves the torsion equation there.  Translation to a far point in the long strip, uniform interior Schauder estimates together with elliptic compactness  give a convergence
$$
 u_a(x,y)\to  \phi_0(y):=\frac{1-y^2}{2}
$$
which is uniform  on fixed compact subsets of the infinite strip; in this step, a Liouville-type result is established. We recall again \cite{D1993} for a version on stable solutions.

Eventually, one choose the aligned points
$$
 P=(0,C_y),
 \qquad Q_a=\left(\frac a4,\frac{C_y}{2}\right),
 \qquad R_a=\left(\frac a2,0\right),
 \qquad Q_a=\frac{P+R_a}{2}.
$$
By the definition of $C_y$ and symmetry, one has $u_a(P)>1$, and the strip limit gives
$$
 u_a(Q_a)\to  \frac12-\frac{C_y^2}{8},
 \qquad
 u_a(R_a)\to \frac12.
$$
By setting
$$
 \frac12-\frac{C_y^2}{8}<\lambda<\frac12.
$$
for large $a$, one gets
$$
 u_a(Q_a)<\lambda<u_a(R_a)<1<u_a(P).
$$
Thus $P,R_a\in\{u_a>\lambda\}$ while their midpoint $Q_a$ is not; this idea  was also used later in \cite{Z2026}.  Again the nonconvexity is transferred via a finite value defect with a strict positive value, even though this method has no residual estimate.

\end{document}